\documentclass[11pt,a4paper,reqno]{amsart}
\usepackage[a4paper,lmargin=3cm,rmargin=3cm,tmargin=4cm,bmargin=4cm]{geometry}

\usepackage[english]{babel}

\usepackage[centertags]{amsmath}
\usepackage{amsfonts}
\usepackage{amssymb}
\usepackage{amsthm}
\usepackage{dsfont}
\usepackage{mathtools}

\usepackage[pdftex]{color}
\usepackage[bookmarks=true,hyperindex,pdftex,colorlinks, citecolor=MidnightBlue,linkcolor=MidnightBlue,
urlcolor=MidnightBlue
]{hyperref}
\usepackage[dvipsnames]{xcolor}

\usepackage{enumerate}
\usepackage{enumitem}

\theoremstyle{plain}
\newtheorem{theorem}{Theorem}
\newtheorem{proposition}[theorem]{Proposition}

\theoremstyle{definition}
\newtheorem*{definition*}{Definition}

\theoremstyle{remark}

\newcommand{\K}{\mathbb{K}}
\newcommand{\R}{\mathbb{R}}

\newcommand{\C}{\mathbb{C}}

\DeclareMathOperator{\re}{Re}

\renewcommand{\geq}{\geqslant}
\renewcommand{\leq}{\leqslant}

\newcommand{\norm}[1]{\left\Vert#1\right\Vert}

\begin{document}

\title[On super Delta-points and the convex-DLD2P in absolute sums]{On super Delta-points and the convex-DLD2P in absolute sums}

\author[J. Guerrero-Viu]{Juan Guerrero-Viu}
\address[J. Guerrero-Viu]{Departamento de Matemáticas, Universidad de Zaragoza, 50009, Zaragoza, Spain} 
\email{j.guerrero@unizar.es}
\urladdr{\href{https://orcid.org/0009-0001-2125-5120}{ORCID: \texttt{0009-0001-2125-5120}}}

\author[J. Markowicz]{Joanna Markowicz}
\address[J. Markowicz]{Department of Mathematics, University of the National Education Commission, ul. Podchorażych 2, 30-084 Kraków, Poland}
\email{joanna.markowicz@uken.krakow.pl} \urladdr{\href{https://orcid.org/0000-0002-6688-5038}{ORCID: \texttt{0000-0002-6688-5038}}}

\subjclass[2020]{Primary 46B04; Secondary 46B20}

\keywords{Super Delta-points; convex diametral local diameter two property; absolute sum; Banach space}

\begin{abstract}
We partially answer two open questions concerning diameter two properties in absolute sums. First, we identify the conditions that a super $\Delta$-point in an absolute sum of Banach spaces imposes on the coordinates. Secondly, we show that the convex diametral local diameter two property (convex-DLD2P) passes from an absolute sum $X\oplus_N Y$ to its factors whenever $N$ is not the $\ell_\infty$-norm. 
\end{abstract}
\maketitle

Let $X$ be a Banach space over the scalar field $\K$, which can be either $\R$ or $\C$. The closed unit ball and the unit sphere of $X$ is denoted by $B_X$ and $S_X$, respectively. We write $X^*$ for the topological dual space of $X$. Moreover, given $x^*\in B_{X^*}$ and $\alpha>0$, the slice of $B_X$ produced by $x^*$ is the (open) set $S(B_X,x^*,\alpha)=\{ x\in B_X : \text{Re } x^*(x)>1-\alpha\}.$

Let $x\in S_X$. According to \cite{ahlp} and \cite{MPR}, it is said that 
\begin{enumerate}
    \item $x$ is a \textit{$\Delta$-point} if for every slice $S$ of $B_X$ with $x\in S$ and for every $\varepsilon>0$, there is some $y\in S$ such that $\norm{x-y}\geq 2-\varepsilon$;
    \item $x$ is a \textit{super $\Delta$-point} if for every relatively weakly open set $W$ of $B_X$ with $x\in W$ and for every $\varepsilon>0$, there is some $y\in W$ such that $\norm{x-y}\geq 2-\varepsilon$.
\end{enumerate}
These notions have received significant attention in recent years \cite{aalmmppv,ahlp,lrt,MPR,VeeorgStudia}, since they can be regarded as natural local versions of some of the classical diameter two properties. Standing in sharp opposition to the Radon–Nikodým property and the convex point of continuity property, diameter two properties ensure that certain subsets of the unit ball are as large as possible in terms of diameter. Recall that $X$ has the \textit{local diameter two property} (LD2P) if every slice of $B_X$ has diameter two; and the \textit{diameter two property} (D2P) whenever every non-empty relatively weakly open subset of $B_X$ has diameter two.  
More recently, several stronger variants of these notions were considered in \cite{blr18}. Namely, we say that
$X$ has the \textit{diametral local diameter two property} (\textit{DLD2P}) if given any slice $S$ of
$B_X$, $x\in S\cap S_X$ and $\varepsilon>0$, there
exists $y\in S$ such that
$\norm{x-y}\geq 2-\varepsilon$ (equivalently, if every $x\in S_X$ is a $\Delta$-point). On the other hand, it is said that $X$ has the \textit{diametral diameter two property}
(\textit{DD2P}) if given any relatively weakly open subset $W$ of
$B_X$, $x\in W\cap S_X$ and $\varepsilon>0$, there
exists $y\in W$ such that
$\norm{x-y}\geq2-\varepsilon$ (equivalently, if every $x\in S_X$ is a super $\Delta$-point).
There are also corresponding versions of the above properties involving convex combinations of slices of $B_X$, but we will not deal with them here.

Furthermore, lying between the LD2P and the DLD2P, and motivated by the problem of determining the minimum amount of $\Delta$-points required to ensure that every slice has diameter two, the following property was introduced in \cite{ahlp}. 
\begin{definition*}
Let $X$ be a Banach space. We say that $X$ has the \textit{convex-DLD2P} if $B_X$ is the closed convex hull of the set of $\Delta$-points, equivalently, if for every slice $S$ of $B_X$ there exists a $\Delta$-point $x\in S$.
\end{definition*}
All the aforementioned properties have been extensively studied and have generated a substantial amount of research over the last decades. We refer the reader to \cite{ahntt,blr15eje1,blr18,MPR} and their references for background and further information on this topic. 

However, some natural questions remain open. In this paper, we address two problems, both concerning absolute sums. Recall that an \emph{absolute normalised norm} $N$ on $\R^2$ is a norm satisfying the following conditions:
\begin{enumerate}
    \item $N(a,b)=N(|a|,|b|)$ for all $(a,b)\in \R^2$;
    \item $N(1,0)=N(0,1)=1$.
\end{enumerate}
Given Banach spaces $X,Y$ and an absolute normalized norm $N$ on $\mathbb{R}^2$, the \emph{absolute sum} of $X$ and $Y$ with respect to $N$, denoted by \(X\oplus_N Y\), is the product space $X\times Y$ endowed with the norm
\[
\norm{(x,y)}_N=N(\|x\|,\|y\|), \qquad (x,y)\in X\times Y.
\]
Important examples are the $\ell_p$-sums ($1\leq p\leq \infty$), corresponding to the case where $N$ is the usual $\ell_p$-norm on $\mathbb{R}^2$; in this case, we write $X\oplus_p Y$. Furthermore, we have $(X\oplus_N Y)^*=X^*\oplus_{N^*} Y^*$, where $N^*$ denotes the dual norm of $N$, which is given by
$$N^*(c,d)=\max_{N(a,b)\leq 1} (|ac|+|bd|), \qquad  (c,d)\in \R^2.$$

Our first result provides an answer to a question raised in \cite{MPR} concerning super $\Delta$-points. More precisely, given $x\in S_X$, $y\in S_Y$, and $a,b\geq 0$ with $N(a,b)=1$, the authors asked which conditions on $x$ and $y$ are forced by the fact that $(ax,by)$ satisfies one of the six standard diametral notions \cite[Question 7.10]{MPR}. While the cases of Daugavet and $\Delta$-points were already known (see \cite[tables on pp. 86-87]{Pirkthesis}), we make further progress on this problem by addressing the case of super $\Delta$-points.

\begin{theorem}\label{thm:superDeltatosummands}
    Let $X,Y$ be Banach spaces and let $N$ be an absolute normalised norm on $\R^2$. Given $x\in S_X$, $y\in S_Y$ and $a,b\geq 0$ with $N(a,b)=1$, suppose that $(ax,by)$ is a super $\Delta$-point in $X\oplus_N Y$.
    \begin{itemize}
        \item[\upshape(a)] If $b\neq1$, then $x$ is a super $\Delta$-point.
        \item[\upshape(b)] If $a\neq 1$, then $y$ is a super $\Delta$-point.
        \item[\upshape(c)] If $a=b=1$, then $x$ or $y$ is a super $\Delta$-point.
    \end{itemize}
\end{theorem}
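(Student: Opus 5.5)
We argue by contraposition, using the standard fact that a relatively weakly open neighbourhood of a point in $B_{X\oplus_N Y}$ can be built from weakly open neighbourhoods of the coordinates. For (a), assume $b\neq 1$ and that $x$ is not a super $\Delta$-point. Then there are a relatively weakly open set $U\subset B_X$ containing $x$ and $\delta>0$ with $\norm{x-u}\leq 2-\delta$ for all $u\in U$. We may take $U$ of the form $U=\{u\in B_X: \re x_i^*(u)>\re x_i^*(x)-\eta,\ i=1,\dots,n\}$.

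The first step is to handle the coordinates separately. If $a=0$ then $b=1$, so $b\neq 1$ forces $a>0$. Pick $y^*\in S_{Y^*}$ with $y^*(y)=1$ and $(c,d)$ with $N^*(c,d)=1$ and $ac+bd=1$. We then consider the weakly open set
\[
W=\{(u,v)\in B_{X\oplus_N Y}:\ \re (c\,x_0^*(u)+d\,y^*(v))>1-\eta',\ \re x_i^*(u)>a\re x_i^*(x)-\eta',\ i\leq n\},
\]
where $x_0^*\in S_{X^*}$ norms $x$. This set contains $(ax,by)$. For $(u,v)\in W$ with $\eta'$ small, the slice condition forces $(\norm{u},\norm{v})$ to be close to the face of $B_N$ normed by $(c,d)$. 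The other conditions force $u/a$ (when $\norm u\approx a$) to lie essentially in $U$ after a small perturbation.

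The next step is to estimate $\norm{(ax,by)-(u,v)}_N\leq N(\norm{ax-u},\norm{by-v})$. Here we want to show that this quantity is bounded away from $2$. The key input is that $b<1$: by the monotonicity of absolute norms, $N(\alpha,\beta)$ can approach $2=2N(a,b)$ only if $(\alpha,\beta)$ is close to $(2a,2b)$. Indeed, $\norm{ax-u}\leq a+\norm u$ and $\norm{by-v}\leq b+\norm v$, and $N(a+\norm u, b+\norm v)\leq 2$, so near-equality forces $(\norm{ax-u},\norm{by-v})$ to be nearly extremal in this sense. This yields $\norm{ax-u}\approx 2a$, i.e.\ $\norm{x-u/a}\approx 2$ with $u/a$ essentially in $U$, which contradicts the choice of $U$.

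The main obstacle lies in the nonstrict convexity of $N$. The face of $B_N$ at $(a,b)$ may be a segment, so $\norm u$ need not be close to $a$. Similarly, $N(\alpha,\beta)\approx 2$ need not force $\alpha\approx 2a$, because one can trade $\alpha$ for $\beta$ along a flat piece. The hypothesis $b\neq 1$ is exactly what rules out $(a,b)$ lying in the interior of the horizontal flat piece $\{(t,1)\}$, where the $X$-coordinate becomes irrelevant. Along other flat segments we would instead choose $(c,d)$ as an exposing functional with $c>0$, and use a convexity argument: if $N(\alpha,\beta)\approx 2$ with $\alpha\leq a+\norm u$ and $\beta\leq b+\norm v$, then the half-sum $\tfrac12(\alpha,\beta)$ lies near the same face and has first coordinate bounded below by a positive multiple of $2a$. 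We would then normalise $u$ as $u/\norm u$ and use $\norm{x-u/\norm u}\geq (\norm{ax-u}-|a-\norm u|)/a$-type estimates to close the argument.

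Part (b) follows by symmetry. For (c), with $a=b=1$ the norm $N$ is $\ell_\infty$ near $(1,1)$. If neither $x$ nor $y$ is a super $\Delta$-point, take $U,V$ and $\delta$ witnessing this for both, and set $W=(U\times V)\cap B$. This set contains $(x,y)$, and on it $\norm{(x,y)-(u,v)}_N\leq N(2-\delta,2-\delta)=2-\delta$, a contradiction.
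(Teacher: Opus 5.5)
Your parts (b) and (c) are fine, and (c) is essentially the paper's argument. In (a), however, the non-strictly convex case, which you flag as the main obstacle, is the whole difficulty, and your sketch does not resolve it.

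\textbf{The neighbourhood $W$.} Its constraints $\re x_i^*(u)>a\re x_i^*(x)-\eta'$ are anchored at the scale $a$. On a flat face of $B_N$, however, the slice condition does not force $\norm{u}\approx a$. For example, take an $\ell_1$-sum with $(a,b)=(\tfrac12,\tfrac12)$ and $(c,d)=(1,1)$. Points $(u,0)$ with $\norm{u}=1$ and $\re x_0^*(u)>1-\eta'$ satisfy your slice condition. For them you only get $\re x_i^*(u/\norm{u})>\tfrac12\re x_i^*(x)-\eta'$, which is far from $u/\norm{u}\in U$. You also cannot simply impose a homogeneous constraint $\re x_i^*(u)>\lambda\norm{u}$, since it is not relatively weakly open in general.

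The paper's key trick is in two steps. First, shrink $U$ to $\bigcap_{i}S(B_X,x_i^*,\alpha)$ with a common $\alpha$. Second, merge each $x_i^*$ with the norming functional, $f_i=(cx_i^*,(1-\alpha)dy^*)$. Then $\re f_i(u,v)>1-\alpha\geq(1-\alpha)(c\norm{u}+d\norm{v})$, together with $\re y^*(v)\leq\norm{v}$, gives $c\re x_i^*(u)>(1-\alpha)c\norm{u}$. Hence $u/\norm{u}\in U$ whatever $\norm{u}$ is. Here $c>0$ automatically, since $c=0$ would force $d=1$ and hence $b=1$.

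\textbf{The distance estimate.} The target $\norm{ax-u}\approx 2a$ is unavailable when $\norm{u}$ is not close to $a$. The conclusion of your half-sum argument, a lower bound on the first coordinate by a multiple of $2a$, is too weak to make $\norm{x-u/\norm{u}}$ close to $2$. What you actually need is $\norm{ax-u}\geq a+\norm{u}-\beta$.

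This follows from a quantitative strict monotonicity of $N$ in the first variable away from the top flat piece (Pirk's Lemma 1.4.14): for every $\beta>0$ there is $\rho>0$ such that $2-\rho\leq N(p,q)\leq N(r,q)\leq 2$ and $q<2-\rho$ imply $|p-r|<\beta$. Apply it with $r=a+\norm{u}$ and $q=b+\norm{v}\leq b+1<2-\rho$. This is exactly where $b<1$ enters.

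Finally, your normalisation bound $\norm{x-u/\norm{u}}\geq(\norm{ax-u}-|a-\norm{u}|)/a$ only yields about $2\norm{u}/a$ when $\norm{u}<a$. In that case you need the other splitting, $\norm{ax-u}\leq(a-\norm{u})+\norm{u}\,\norm{x-u/\norm{u}}$, together with a lower bound on $\norm{u}$. The paper gets that lower bound by building an extra margin $\gamma$ into the slices $S(B_{X\oplus_N Y},f_i,\alpha-\gamma)$ and choosing $\beta<\gamma\varepsilon$.
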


\begin{proof}
    (a). We adapt the proof of \cite[Theorem 3.4.4]{Pirkthesis}. If $b\neq 1$, then $a\neq 0$. Let $c,d\geq 0$ such that $N^*(c,d)=1$ and $ac+bd=1$. Suppose that $x$ is not a super $\Delta$-point. Then, there exist $\varepsilon>0$ and a weak open set $W$ in $B_X$ with $x\in W$, such that $\norm{\widetilde{x}-x}< 2-\varepsilon$, for all $\widetilde{x}\in W$. 
    Pick any $x_1^*,\ldots,x_n^*\in S_{X^*}$ and $\alpha_1,\ldots,\alpha_n\in (0,1)$ such that $$x\in \bigcap_{i=1}^n S(B_X,x_i^*,\alpha_i)\subseteq W.$$ Thanks to \cite[Lemma 2.1]{IK} we may consider $\alpha\coloneq \alpha_1=\cdots = \alpha_n$. Pick $y^*\in S_{Y^*}$ with $y^*(y)=1$ and consider for each $i\in \{1,\ldots,n\}$, the functional $f_i\coloneqq (cx_i^*, (1-\alpha)d y^*)\in X^*\oplus_{N^*}Y^*$ which satisfies 
    $$
    \|f_i\|=N^*(c,(1-\alpha)d)\leq N^*(c,d)=1.
    $$
    Since $$\re f_i(ax,by)= ac(\re x_i^*(x))+(1-\alpha)bd(\re y^*(y))>(1-\alpha)(ac+bd)=1-\alpha,$$
    we may choose $\beta,\gamma >0$ such that $\beta<\min\{a\varepsilon, \gamma \varepsilon\}$ and $\text{Re }f_i(ax,by)>1-(\alpha-\gamma)$, for all $i\in \{1,\ldots,n\}$. Furthermore, using \cite[Lemma 1.4.14]{Pirkthesis}, pick $\delta>0$ such that, for every $p,q,r\geq 0$, if $$2-\delta\leq N(p,q)\leq N(r,q)\leq 2\quad \text{and}\quad q<2-\delta,$$ then $|p-r|<\beta$. There is no loss of generality in assuming that $b<1-\delta$. Consider now the relatively weakly open set $$\widetilde{W}\coloneqq \bigcap_{i=1}^n S(B_{X\oplus_N Y},f_i,\alpha-\gamma),$$
    which clearly contains $(ax,by)$. Since it is a super $\Delta$-point, we can find $(\widetilde{x},\widetilde{y})\in \widetilde{W}$ with $$\norm{(ax,by)-(\widetilde{x},\widetilde{y})}_N\geq 2-\delta.$$ Hence, for each $i\in \{1,\ldots,n\}$,
    \begin{align*}
        c(\re x_i^*(\widetilde{x}))+(1-\alpha)d\norm{\widetilde{y}}&\geq c(\re  x_i^*(\widetilde{x}))+(1-\alpha)d (\re  y^*(\widetilde{y}))
        \\&= \re f_i(\widetilde{x},\widetilde{y}) >1-(\alpha-\gamma)>1-\alpha\\&\geq (1-\alpha)(c\norm{\widetilde{x}}+d\norm{\widetilde{y}}),
    \end{align*}
    which proves $$c(\re x_i^*(\widetilde{x}))>(1-\alpha)c\norm{\widetilde{x}}.$$ In other words,
    $$\frac{\widetilde{x}}{\norm{\widetilde{x}}}\in \bigcap_{i=1}^n S(B_X,x_i^*,\alpha_i)\subseteq W.$$ Therefore, by hypothesis, we know that $\norm{\frac{\widetilde{x}}{\norm{\widetilde{x}}}-x}<2-\varepsilon$. Let us show that $\norm{ax-\widetilde{x}}<a+\norm{\widetilde{x}}-\beta$. We consider two cases. If $\norm{\widetilde{x}}\geq a$, we have
    \begin{align*}
        \norm{ax-\widetilde{x}}&\leq \norm{ax-a\frac{\widetilde{x}}{\norm{\widetilde{x}}}}+\norm{a\frac{\widetilde{x}}{\norm{\widetilde{x}}}-\widetilde{x}}
        \\&< a(2-\varepsilon)+|a-\norm{\widetilde{x}}|
        \\&=a+\norm{\widetilde{x}}-a\varepsilon
        <a+\norm{\widetilde{x}}-\beta.
    \end{align*}
    On the other hand, if $a\geq \norm{\widetilde{x}}$, then
    \begin{align*}
        c\norm{\widetilde{x}}+(1-\alpha)d\norm{\widetilde{y}}&\geq c(\re  x_i^*(\widetilde{x}))+(1-\alpha)d(\re  y^*(\widetilde{y}))\\&= \re f_i(\widetilde{x},\widetilde{y})>1-\alpha+\gamma \\&\geq (1-\alpha)d\norm{\widetilde{y}}+\gamma,
    \end{align*}
    which yields to $\norm{\widetilde{x}}\geq c\norm{\widetilde{x}}>\gamma$. Hence,
    \begin{align*}
        \norm{ax-\widetilde{x}}&\leq \norm{ax-\norm{\widetilde{x}}x}+\norm{\norm{\widetilde{x}}x-\widetilde{x}}\\&\leq a-\norm{\widetilde{x}}+\norm{\widetilde{x}}(2-\varepsilon)
        \\&=a+\norm{\widetilde{x}}-\norm{\widetilde{x}}\varepsilon< a+\norm{\widetilde{x}}-\beta.
    \end{align*}
    Finally, it follows that
    \begin{align*}
        N(a+\norm{\widetilde{x}}-\beta,b+\norm{\widetilde{y}})&\geq N(\norm{ax-\widetilde{x}},\norm{by-\widetilde{y}})=\norm{(ax,by)-(\widetilde{x},\widetilde{y})}_N\geq 2-\delta,
    \end{align*}
    so
    $$2-\delta\leq  N(a+\norm{\widetilde{x}}-\beta,b+\norm{\widetilde{y}})\leq N(a+\norm{\widetilde{x}},b+\norm{\widetilde{y}})\leq 2.$$
    However, recall that $b+\norm{\widetilde{y}}<2-\delta$, so by the choice of $\delta$ we obtain $$\beta=|(a+\norm{\widetilde{x}}-\beta)-(a+\norm{\widetilde{x}})|<\beta,$$
    which is clearly a contradiction.

    (b). It is analogous to a).

    (c). If $a=b=1$, then $N$ is the $\ell_\infty$-norm. Hence, suppose that both $x$ are $y$ are not super $\Delta$-points. Then, there is $\varepsilon>0$ and weak open sets $W,V$ in $B_X$ and $B_Y$ respectively such that $x\in W$, $y\in V$ and $$\norm{(x,y)-(\widetilde{x},\widetilde{y})}_\infty<2-\varepsilon,\quad  \forall (\widetilde{x},\widetilde{y})\in W\times V.$$
    Since $W\times V$ is a weak open set in $B_{X\oplus_\infty Y}$, we conclude that $(x,y)$ is not super $\Delta$.
\end{proof}

The second question we address concerns the convex-DLD2P. In \cite[Problem~1]{ahlp} (see also \cite[Problem~9]{Pirkthesis}), it was asked whether the convex-DLD2P in $X\oplus_N Y$ forces the factors to have the same property. We answer this question for every absolute sum different from the $\ell_\infty$-sum.

The main difficulty in dealing with the $\ell_\infty$-sum stems from the existence of $\Delta$-points whose coordinates need not be $\Delta$-points in the corresponding factors (see \cite[Example~3.4.6]{Pirkthesis}).

\begin{proposition}\label{prop:convextosummands}
    Let $X$ and $Y$ be Banach spaces and let $N$ be an absolute normalised norm on $\R^2$ different from the $\ell_\infty$-norm.  If $X\oplus_N Y$ has the convex-DLD2P, then both $X$ and $Y$ have the convex-DLD2P.
\end{proposition}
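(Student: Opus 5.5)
The plan is to lift a slice of $B_X$ to a slice of $B_{X\oplus_N Y}$, take a $\Delta$-point inside the lifted slice given by the hypothesis, and project it back to $X$. The transfer step uses the $\Delta$-point analogue of Theorem~\ref{thm:superDeltatosummands}(a), which is the known result \cite[Theorem 3.4.4]{Pirkthesis} whose proof we adapted above. It says: if $(ax_0,by_0)$ is a $\Delta$-point in $X\oplus_N Y$ with $x_0\in S_X$, $y_0\in S_Y$, $a,b\geq 0$, $N(a,b)=1$ and $b\neq 1$, then $x_0$ is a $\Delta$-point of $X$. By symmetry it suffices to treat $X$.

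Fix a slice $S(B_X,x^*,\alpha)$ with $x^*\in S_{X^*}$. The key preliminary step uses that $N$ is not the $\ell_\infty$-norm. An absolute normalised norm equals $\max$ if and only if $N(1,1)=1$, so here $N(1,1)>1$. By continuity of $N$ we can choose $\alpha'\in(0,\alpha]$ with $N(1-\alpha',1)>1$. Since absolute norms are monotone in each coordinate on $\R_+^2$, it follows that $N(s,t)\leq 1$ and $s>1-\alpha'$ force $t<1$. Replacing $\alpha$ by $\alpha'$ shrinks the slice, so it is enough to find a $\Delta$-point in $S(B_X,x^*,\alpha')$.

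Consider $f=(x^*,0)\in X^*\oplus_{N^*}Y^*$. Then $\|f\|=N^*(1,0)=1$, because $N^*$ is also absolute and normalised. By the convex-DLD2P of $X\oplus_N Y$, the slice $S(B_{X\oplus_N Y},f,\alpha')$ contains a $\Delta$-point $(u,v)$, which necessarily lies in the unit sphere. Write $u=ax_0$ and $v=by_0$ with $a=\|u\|$, $b=\|v\|$, $x_0\in S_X$, $y_0\in S_Y$, choosing $y_0$ arbitrarily if $v=0$. Then $N(a,b)=1$, and
\[
a\geq \re x^*(u)>1-\alpha',
\]
so $a>0$, and by the preliminary step $b<1$. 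The cited $\Delta$-point transfer result then shows that $x_0=u/\|u\|$ is a $\Delta$-point of $X$. Moreover,
\[
\re x^*(x_0)=\re x^*(u)/\|u\|\geq \re x^*(u)>1-\alpha',
\]
so $x_0\in S(B_X,x^*,\alpha')\subseteq S(B_X,x^*,\alpha)$. Hence every slice of $B_X$ contains a $\Delta$-point.

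The main obstacle is the $\Delta$-point version of the coordinate transfer. If it were not available from \cite{Pirkthesis}, I would reprove it by running the argument of Theorem~\ref{thm:superDeltatosummands}(a) with $n=1$. That argument only ever uses the single slice $S(B_X,x^*_1,\alpha)$, lifted to $S(B_{X\oplus_N Y},(cx_1^*,(1-\alpha)dy^*),\alpha-\gamma)$. The hypothesis $b\neq1$ is used only to apply \cite[Lemma 1.4.14]{Pirkthesis}, so the same proof works verbatim with slices in place of weakly open sets. The only other delicate point is guaranteeing $b\neq1$, and this is exactly where the assumption that $N$ is not the $\ell_\infty$-norm enters.
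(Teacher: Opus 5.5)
Your proof is correct and follows the paper's argument: you lift the slice through $(x^*,0)$, use $N\neq\ell_\infty$ to force $b\neq 1$ for the $\Delta$-point $(u,v)$ (your condition $N(1-\alpha',1)>1$ is exactly the paper's requirement $1-\alpha>t$ with $t=\max\{s\in[0,1]\colon N(s,1)\leq 1\}$), invoke the $\Delta$-point transfer \cite[Theorem 3.4.4]{Pirkthesis}, and check that $x_0=u/\|u\|$ stays in the slice. The only point left implicit is that $\alpha'$ must be taken in $(0,1)$; your continuity argument gives this, and it is what makes $a>0$ and $\re x^*(u)/\|u\|\geq \re x^*(u)$ valid.
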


\begin{proof}
Suppose that $X$ fails the convex-DLD2P (a similar argument is valid for $Y$), so there is some non-empty slice $S=S(B_X,f,\alpha)$ that does not contain any $\Delta$-point. Since $N$ is different from the $\ell_\infty$-norm, consider $t:=\max\{ s\in [0,1]\colon  N(s,1)\leq1\}$ which clearly verifies $t<1$. Hence, up to consider a smaller slice inside $S$ if necessary, we may assume that $1-\alpha>t$. Next, define the slice
        $$\widetilde{S}:=\left\{(x,y)\in B_{X\oplus_N Y} \colon x\in S \right\}=S(B_{X\oplus_N Y}, (f,0), \alpha),$$
        which is non-empty because $S\neq \emptyset$. Therefore, there is some $\Delta$-point $(ax,by)\in \widetilde{S}$, where $x\in S_X$, $y\in S_Y$, $a,b\geq 0$ and $N(a,b)=1$. Furthermore, as $ax\in S$, then $$a=\norm{ax}\geq \re  f(ax)>1-\alpha>t.$$
        Thus, by the maximality of $t$, we have $b\neq 1$ which yields to $x$ being a $\Delta$-point (see \cite[Theorem 3.4.4]{Pirkthesis}). Finally, $$\re  f(x)=\frac{1}{a} \re  (ax)>\frac{1}{a}(1-\alpha)>1-\alpha,$$
        so $x\in S$, leading to a contradiction.   
\end{proof}

\section*{Acknowledgements}  

The authors are grateful to Miguel Martín and Abraham Rueda Zoca for their comments.

Part of this work was conducted during the first-named author’s visit to Universidad de Granada in March 2026 supported by ``Maria de Maeztu'' Excellence Unit IMAG (CEX2020-001105-M), funded by MICIU/AEI/10.13039/501100011033  and by Junta de Andaluc\'ia. The author is grateful for the hospitality and support received.

The research of J.\ Guerrero-Viu was supported by FPU24/02284 predoctoral grant funded by MCIU and by grant PID2022-137294NB-I00 funded by  MCIN\slash AEI\slash10.13039\slash501100011033 and by “ERDF A way of making Europe”. 
The research of J. Markowicz was partially supported by The Excellent Mobility DNa.711/IDUB/EM/2024/01/00023 at the University of the National Education Commission. 

The authors acknowledge the use of artificial intelligence–based tools, including ChatGPT (OpenAI), and Gemini (Google), for language editing and stylistic improvements of the manuscript. The authors remain fully responsible for the content of this work.

\end{document}